\newcommand{\bd}{\begin{description}}
\newcommand{\ed}{\end{description}}
\newcommand{\bi}{\begin{itemize}}
\newcommand{\ei}{\end{itemize}}
\newcommand{\be}{\begin{enumerate}}
\newcommand{\ee}{\end{enumerate}}
\newcommand{\beq}{\begin{equation}}
\newcommand{\eeq}{\end{equation}}
\newcommand{\beqs}{\begin{eqnarray*}}
\newcommand{\eeqs}{\end{eqnarray*}}
\definecolor{DarkGreen}{rgb}{0.2, 0.6, 0.3}
\newtheorem{theorem}{Theorem}
\newtheorem{conjecture}{Conjecture}
\newtheorem{definition}{Definition}
\newtheorem{corollary}{Corollary}
\newtheorem{case}{Case}
\newtheorem{subcase}{Subcase}[case]
\newtheorem{claim}{Claim}
\newtheorem{remark}{Remark}
\newtheorem{proposition}[theorem]{Proposition}
\newtheorem{problem}{Problem}
\newtheorem{property}{Property}
\newtheorem{observation}[theorem]{Observation}
\begin{document}
\title{\textbf{On the two problems in Ramsey achievement games} \footnote{Supported by the National
Science Foundation of China (No. 12061059) and the Qinghai Key Laboratory of
Internet of Things Project (2017-ZJ-Y21).} }

\author{Zhong Huang\footnote{School of Information and Mathematics, Yangtze University, Jingzhou 434023, China. {Email: \tt hz@yangtzeu.edu.cn}}, \ Yusuke Kobayashi \footnote{Research Institute for Mathematical Sciences (RIMS), Kyoto University, Kyoto, Japan. {Email: \tt yusuke@kurims.kyoto-u.ac.jp}}, \ Yaping Mao\footnote{Corresponding author: Faculty of Environment and Information
Sciences, Yokohama National University, 79-2 Tokiwadai, Hodogaya-ku,
Yokohama 240-8501, Japan. {Email: \tt maoyaping@ymail.com}}, \
Bo Ning\footnote{College of Computer Science, Nankai University, Tianjin 300350, China. {Email: \tt bo.ning@nankai.edu.cn}. Partially supported by NSFC (No. 11971346)},
\   Xiumin Wang \footnote{College of Cyber Science, Nankai University, Tianjin 300350, China. {Email: \tt wangxiumin1111@163.com}}}
\date{}
\maketitle

\begin{abstract}
Let $p,q$ be two integers with $p\geq q$. Given a finite graph $F$ with no isolated vertices, the generalized Ramsey achievement game of $F$ on the complete graph $K_n$, denoted by $(p,q;K_n,F,+)$, is played by two players called Alice and Bob. In each round, Alice firstly chooses $p$ uncolored edges $e_1,e_2,...,e_p$ and colors it blue, then Bob chooses $q$ uncolored edge $f_1,f_2,...,f_q$ and colors it red; the player who can first complete the formation of $F$ in his (or her) color is the winner. The generalized achievement number of $F$, denoted by ${a}(p,q;F)$ is defined to be the smallest $n$ for which Alice has a winning strategy. If $p=q=1$, then it is denoted by ${a}(F)$ (in short), which is the classical achievement number of $F$ introduced by Harary in 1982. If Alice aims to form a blue $F$, and the goal of Bob is to try to stop him, this kind of game is called the first player game by Bollobas and also a kind of Maker/Breaker game. Let ${a}^*(F)$ be the smallest positive integer $n$ for which Alice has a winning strategy in the first player game. 

A conjecture due to Harary states that the minimum value of ${a}(T)$ is realized when $T$ is a path and the maximum value of ${a}(T)$ is realized when $T$ is a star among all trees $T$ of
order $n$. He also asked which graphs $F$ satisfy $a^*(F)=a(F)$? In this paper, we proved that $n\leq {a}(p,q;T)\leq n+q\left\lfloor (n-2)/p \right\rfloor$ for all trees $T$ of
order $n$, and obtained
a lower bound of ${a}(p,q;K_{1,n-1})$, where $K_{1,n-1}$ is a star. We proved that the minimum value of ${a}(T)$ is realized when $T$ is a path which gives a positive solution to the first part of Harary's conjecture, and ${a}(T)\leq 2n-2$ for all trees of order $n$. We also proved that for $n\geq 3$, we have
$2n-2-\sqrt{(4n-8)\ln (4n-4)}\leq a(K_{1,n-1})\leq 2n-2$ with the help of a theorem of Alon, Krivelevich, Spencer and Szabó. We proved that $a^*(P_n)=a(P_n)$ for a path $P_n$, which gives a graph satisfying the solution to Harary's problem. \\[2mm]
{\bf Keywords:} Ramsey theory; Games on graphs; Tree; Achievement game\\[2mm]
{\bf AMS subject classification 2020:} 05C05; 05C15; 05C57.
\end{abstract}

\section{Introduction}

A \textit{positional game} is defined as a pair \((X, \mathcal{F})\), where \(X\) represents a finite set known as the board, and \(\mathcal{F}\) is a family of target sets. In this game, two players take turns claiming previously unclaimed elements of \(X\) until all elements have been claimed. The structure \((X, \mathcal{F})\) is also referred to as the hypergraph of the game, where the vertices correspond to the elements of \(X\) and the hyperedges correspond to the elements of \(\mathcal{F}\). The foundational papers on positional games were published in 1963 by Hales and Jewett \cite{HJ63} and in 1973 by Erdős and Selfridge \cite{ES73}.


Following \cite{HKSS14}, we generally define two types of positional games: weak games and strong games, on a finite hypergraph \( H = (V, \mathcal{F}) \), where \( \mathcal{F} \subseteq 2^V \). The set \( V \) is called the \textit{board} of the game, while the edges \( F \) are referred to as the \textit{winning sets}.  

\begin{itemize}
    \item 
     A \textit{strong game} on \( H \) is played as follows: Players Alice and Bob take turns occupying previously unoccupied points of the board $V$, with one point per move. The objective is for a player to occupy all points of some specified edge in \( \mathcal{F} \) before their opponent does. If neither player achieves this, the game ends in a draw.


    \item 
    In a \textit{weak game} on \( H \), the rules resemble those of a strong game, with the winning condition being distinct: Player Alice wins by occupying all points of of some specified edge in \( \mathcal{F} \) ; otherwise, victory goes to Player Bob. 
    
    
\end{itemize}

The strong Ramsey games, denoted as \( \mathcal{R}(B, G) \), were first introduced by Harary \cite{Ha72, Ha82} for cliques and later extended to arbitrary graphs \cite{Ha82}. It is a strong game played on a finite or infinite graph \( B \) (called the board), and the winning set \( \mathcal{F} \) consists of all isomorphic copies of a given target graph \( G \). For further insights into strong Ramsey games, a comprehensive survey by Beck \cite{Beck02} provides more detailed information.

David et al. \cite{DHT20} noted that according to a comprehensive strategy-stealing argument by Nash, Bob cannot possess a winning strategy in the strong Ramsey games. Applying Ramsey's theorem \cite{Ra30} to any fixed target graph \( G \), it is established that for sufficiently large \( n \), the Ramsey number \( \mathcal{R}\left(K_n, G\right) \) does not result in a draw. Consequently, Alice possesses a winning strategy for sufficiently large \( n \) in \( \mathcal{R}\left(K_n, G\right) \). Despite the strength of this argument, explicit strategies are notably absent in the literature. Specifically, no explicit strategy has been demonstrated for \( \mathcal{R}\left(K_n, K_k\right) \) when \( k \geq 5 \) and \( n \) is large. Hefetz et al. \cite{HKSS14} pointed out that ``strong games are notoriously hard to analyze, and rather few results are available at present'' (quoted from \cite[page~11]{HKSS14}).

When $B=E(K_n)$ in the strong Ramsey game $\mathcal{R}(B, G)$, this special Ramsey game was called the achievement game of $G$ by Harary \cite{Ha82}. Slany \cite{Slany01} proved that the (graph Ramsey) achievement games are $\mathrm{PSPACE}$-complete (and so $NP$-complete).
Harary \cite{Ha82} defined  \emph{achievement number} of $G$, denoted by ${a}(G)$, to be the smallest $n$ for which Alice has a winning strategy, and presented the exact values for ${a}(G)$ when $G$ is a graph of order at most $4$.

The following conjecture, proposed by Harary \cite{Ha82}, is one main motivation of our paper presented here.
\begin{conjecture}{\upshape \cite{Ha82}}\label{conj-1}
Among all trees $T$ of
order $n$, $(i)$ the minimum value of ${a}(T)$ is realized when $T = P_n$ is the path; $(ii)$ the maximum of ${a}(T)$
is attained for $T=K_{1, n-1}$ is the star.
\end{conjecture}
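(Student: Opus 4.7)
My plan for Conjecture~1 leans on the general double bound $n\le a(T)\le 2n-2$ for every tree $T$ of order $n$, which is the $p=q=1$ specialisation of the inequality announced in the abstract.

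For part~(i) I would show $a(P_n)\le a(T)$ for every tree $T$ of order $n$ via an Alice path-extension strategy on $K_N$. Alice maintains a blue path $Q$ and, on each move, extends $Q$ by one vertex along an uncoloured edge from an endpoint of $Q$ to a vertex outside $V(Q)$. At Alice's $i$-th move, $Q$ has $i$ vertices and the two endpoints offer $2(N-i)$ candidate extensions against at most $i-1$ edges Bob has played, so an extension exists whenever $2(N-i)>i-1$; this yields $a(P_n)\le \lceil (3n-3)/2\rceil$ out of the box, and a two-move look-ahead that reserves two disjoint extensions per endpoint should tighten it further. To compare against non-path trees $T$, I would prove a matching lower bound by exploiting the fact that any non-path tree has a vertex of degree at least $3$: a counting argument at such a branching vertex shows Alice needs strictly more room to commit three incident blue edges than she needs to grow a pure path. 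Alternatively, one could seek a coupling argument turning any Alice-winning strategy for $T$ into an Alice-winning strategy for $P_n$ on the same board.

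For part~(ii) I would sandwich $a(K_{1,n-1})$ between the universal bound $a(T)\le 2n-2$ and a tight lower bound $a(K_{1,n-1})\ge 2n-2$. The upper bound follows from an Alice strategy on $K_{2n-2}$ that builds $T$ along a BFS traversal rooted at a chosen vertex, using the slack of $n-2$ spare vertices to absorb Bob's blocks at each branching step. For the matching lower bound I would give a Bob strategy on $K_{2n-3}$ that keeps the blue degree of every vertex at most $n-2$. A natural candidate uses a near $1$-factorisation of $K_{2n-3}$ as Bob's pairing template: each time Alice plays an edge, Bob plays, if possible, an uncoloured edge in the same matching. Since every vertex meets each matching in at most one edge, Bob's responses guarantee at most one blue edge per matching at every vertex, so the blue degree is bounded by the number of matchings on which Bob is forced to default; careful accounting should keep this below $n-1$.

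The main obstacle in part~(i) is the matching lower bound for non-path trees, where simple counting loses a linear factor and a structural argument using the degree sequence of $T$ is probably needed. The main obstacle in part~(ii) is executing Bob's pairing on the odd-order graph $K_{2n-3}$, which admits no true $1$-factorisation, so Bob will occasionally face a defaulted partner that Alice has already taken; controlling the defaults to stay within $O(1)$ per vertex is the hard step. This is precisely the gap between the abstract's current probabilistic lower bound $a(K_{1,n-1})\ge 2n-2-\sqrt{(4n-8)\ln(4n-4)}$, obtained via the Alon--Krivelevich--Spencer--Szab\'o theorem, and the conjectured tight value $2n-2$; closing it requires a genuinely constructive pairing argument.
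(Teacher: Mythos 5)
There is a genuine gap, and also a structural over-complication, in your plan for part (i). First the over-complication: since a tree of order $n$ cannot occur in $K_{n-1}$, the trivial bound $a(T)\ge n$ holds for \emph{every} tree $T$ of order $n$, so part (i) reduces entirely to proving $a(P_n)=n$; no lower bound for non-path trees and no ``branching vertex needs strictly more room'' argument is needed (and the conjecture allows ties, as indeed happens at $n=4$ where $a(P_4)=a(K_{1,3})=5$). Now the gap: your greedy endpoint-extension count only yields $a(P_n)\le\lceil(3n-3)/2\rceil$, which does not give $a(P_n)=n$ and hence does not close part (i) by the trivial-lower-bound route; and your fallback, that every non-path tree requires strictly more than roughly $3n/2$ vertices, is unsubstantiated and almost certainly false for trees close to paths. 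The paper's proof of $a(P_n)=n$ (for $n\ge 5$) is exactly the ``tightened look-ahead'' you gesture at, but the content is in the precise invariant: Alice grows a blue path while guaranteeing that at most one red edge leaves one endpoint into the unused vertex set, no red edge leaves the other endpoint, and no red edge lies inside the unused set; this invariant is maintained up to a blue $P_{n-3}$, after which an explicit case analysis on the remaining five vertices finishes the path. Without that invariant and endgame, the claim $a(P_n)=n$ is not established.

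For part (ii) your assessment is essentially right, and you should know that the paper does not prove it either: it only obtains $2n-2-\sqrt{(4n-8)\ln(4n-4)}\le a(K_{1,n-1})\le 2n-2$ via the Alon--Krivelevich--Spencer--Szab\'o discrepancy theorem, and the tight lower bound $a(K_{1,n-1})\ge 2n-2$ remains open. One concrete flaw in your proposed pairing: in a near-one-factorization of $K_{2n-3}$ each vertex lies in $2n-4$ of the matchings, so guaranteeing ``at most one blue edge per matching at each vertex'' bounds the blue degree only by $2n-4$, which is useless here. A degree-limiting pairing would have to pair edges sharing a common endpoint rather than edges in a common matching, and no such pairing strategy is known to achieve the factor $2$; this is precisely why the known lower bound is probabilistic and falls short of $2n-2$ by a $\sqrt{n\log n}$ term.
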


Maybe it is easier to study weak games (also known as Maker/Breaker games). In this direction, Bollob\'as suggested a simpler achievement game, called \textit{first player games}. It is a weak game on \( H=(K_n,\mathcal{F}) \) ,  in which the winning set \( \mathcal{F} \) consists of all isomorphic copies of a given target graph \( G \) (see \cite{Ha82}). Let ${a}^*(F)$ be the smallest $n$ for which Alice has a winning strategy in the first player game. Bollob\'{a}s and Papaioannou (see \cite{Ha82}) determined  $a^*(F)$ if $F$ a path, matching, and other graphs, but it seems that this paper has not been published. 
Papaioannou \cite{Pa82} proved that the Hamiltonian cycle $C_n$ can be achieved on complete graphs of order $n \geq 8$.

\begin{observation}\label{obs}
$a^*(F)\leq a(F)$ for any graph $F$.     
\end{observation}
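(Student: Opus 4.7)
The plan is to argue by strategy transfer: any Alice-winning strategy in the strong achievement game on $K_n$ can be replayed verbatim as a winning strategy for Alice in the first player (Maker-Breaker) game on the same board. Taking $n = a(F)$ immediately yields $a^*(F) \leq a(F)$.

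First, I would let $n = a(F)$ and fix an Alice-winning strategy $S$ in the strong game on $K_n$, so that, regardless of Bob's moves, Alice completes a blue copy of $F$ strictly before Bob completes a red copy of $F$. In the first player game on $K_n$, I would instruct Alice to play according to the same $S$. Now let $B$ be any strategy of Bob in the first player game; although Bob's only announced objective there is to prevent Alice from forming $F$, the strategy $B$ is abstractly nothing more than a function from game histories to unclaimed edges, hence is also a legal Bob strategy in the strong game.

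Next, I would observe that when $S$ and $B$ are played against each other, the resulting sequence of claimed edges is identical whether we interpret the play as a strong game or as a first player game: the rules governing moves are the same, and only the winning conditions differ. Since $S$ is a winning strategy in the strong game against every Bob strategy, it wins in particular against $B$, so Alice completes a blue copy of $F$ at some moment during the play. The very same completion occurs in the first player game, which is precisely the Maker's winning condition there, so Alice wins. Therefore $a^*(F) \leq n = a(F)$.

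There is no real obstacle here; the only point worth articulating carefully is that a Maker-Breaker Bob strategy is legitimate in the strong game (it merely ignores Bob's own winning condition), and that ``Alice forms $F$ before Bob'' is a strictly stronger event than ``Alice forms $F$'', which is exactly what Maker needs.
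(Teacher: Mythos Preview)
Your argument is correct: a winning strategy for Alice in the strong game on $K_n$ is, verbatim, a winning Maker strategy in the first player game on $K_n$, since ``completing $F$ before Bob does'' entails ``completing $F$''. The paper states this as an observation without proof, and your strategy-transfer reasoning is precisely the natural justification it leaves implicit.
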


Harary asked the following natural open problem.
\begin{problem}{\upshape \cite{Ha82}}\label{prob1}
Which graphs $F$ satisfy $a^*(F)=a(F)$ ?
\end{problem}

The main purpose of this paper is to study Conjecture \ref{conj-1} and Problem \ref{prob1}. In fact, we will give more general results on Conjecture \ref{conj-1}.

To state our main results on Conjecture \ref{conj-1}, we need to use generalized concepts (unbiased version) of achievement game and achievement number as follows.
\begin{definition}
Let $p,q$ be two integers with $p\geq q$. Given a finite graph $G$ with no isolated vertices,  the $(p,q)$-achievement game of $G$ on the complete graph $K_n$, denoted by $(p,q;K_n,G,+)$, is played by two players called Alice and Bob. In each round, Alice first chooses $p$ uncolored edges $e_1,e_2,...,e_p$ and color it blue, then Bob chooses $q$ uncolored edge $f_1,f_2,...,f_q$ and color it red; the player who can first complete the formation of $G$ in his (or her) color is the winner.    
\end{definition}

\begin{definition}
The \emph{$(p,q)$-achievement number} of $G$, denoted by ${a}(p,q;G)$, is defined to be the smallest $n$ for which Alice has a winning strategy
in the game $(p,q;K_n,G,+)$.
\end{definition}

We give our arrangements of this papers as follows. For all trees $T$ of
order $n$, we prove
$n\leq {a}(p,q;T)\leq n+q\left\lfloor (n-2)/p \right\rfloor$ (Theorem \ref{th-trees}). An immediate corollary shows that
$n\leq {a}(T)\leq 2n-2$ for all trees $T$ of
order $n$ (Corollary \ref{cor3-1}), which makes a progress towards Conjecture \ref{conj-1}. For any $\epsilon > 0$, we show that there exists $n_0$ such that
$a(p,q;K_{1, n}) \ge a^*(p,q;K_{1, n}) \ge \left(1+\frac{q}{2p} - \epsilon\right) n$
for any $n \ge n_0$ (Theorem \ref{th-Lower}), which gives a lower bound of $a(p,q;K_{1, n})$. Next, we prove that
$2n-2-\sqrt{(4n-8)\ln (4n-4)}\leq a(K_{1,n-1})\leq 2n-2$ (Theorem \ref{th-Star-bounds}) by a result of Alon, Krivelevich, Spencer and Szabó (see \cite{AKSS05}).
In the end, we prove that ${a}(P_n) = n$ for $n \geq 5$ (Theorem~\ref{thm:pn-1}), which implies the lower bound in Conjecture \ref{conj-1} is true. We prove that $a^*(P_n)=a(P_n)$ for a path $P_n$ (Corollary \ref{cor3}), which gives an answer of Problem \ref{prob1}. 

\section{Harary's conjecture on the achievement numbers of trees}
\label{sec:bipartite}

%



We can give upper and lower bounds of ${a}(p,q;T)$. 
\begin{theorem}\label{th-trees}
For all trees $T$ of
order $n$, we have
$$
n\leq {a}(p,q;T)\leq n+q\left\lfloor\frac{n-2}{p} \right\rfloor.
$$
\end{theorem}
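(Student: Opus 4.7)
The lower bound $n\leq a(p,q;T)$ is immediate: any copy of $T$ uses $n$ vertices, so when $N<n$ the board $K_N$ contains no copy of $T$, neither player can win, and in particular Alice has no winning strategy.

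For the upper bound I set $N = n + q\lfloor(n-2)/p\rfloor$ and describe an explicit winning strategy for Alice. First I fix a \emph{leaf-addition order} $v_1,\ldots,v_n$ of $T$: by peeling off leaves from $T$ in reverse, I obtain an ordering such that every $v_i$ with $i\geq 2$ has a unique neighbor $\pi(v_i)\in\{v_1,\ldots,v_{i-1}\}$. Alice will incrementally build an injective embedding $\phi\colon V(T)\to V(K_N)$ via her blue edges; her $j$-th blue edge ($j=1,\ldots,n-1$) is planned to realize $\pi(v_{j+1})v_{j+1}$. On her very first move she picks $\phi(v_1),\phi(v_2)$ arbitrarily and colors $\phi(v_1)\phi(v_2)$ blue (no red edges yet exist). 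For each subsequent move ($j\geq 2$), the images $\phi(v_1),\ldots,\phi(v_j)$ have already been placed, and she chooses $\phi(v_{j+1})$ to be any vertex in $V(K_N)\setminus\{\phi(v_1),\ldots,\phi(v_j)\}$ for which the edge $\phi(\pi(v_{j+1}))\phi(v_{j+1})$ is still uncolored.

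The main task is to verify that such a vertex always exists. Just before Alice plays her $j$-th blue edge ($j\geq 2$), she has embedded $j$ vertices, leaving $N-j$ unused. Bob has completed exactly $\lceil j/p\rceil-1$ rounds by this point, so he has colored at most $(\lceil j/p\rceil-1)q$ edges red, and consequently at most this many unused vertices are red-adjacent to $\phi(\pi(v_{j+1}))$. Hence Alice has at least $N-j-(\lceil j/p\rceil-1)q$ valid candidates. The map $j\mapsto j+(\lceil j/p\rceil-1)q$ is non-decreasing in $j$, so the worst case is $j=n-1$; using the arithmetic identity $\lceil(n-1)/p\rceil-1=\lfloor(n-2)/p\rfloor$, the condition ``at least one candidate remains'' becomes $N\geq n+q\lfloor(n-2)/p\rfloor$, exactly our choice of $N$. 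After Alice places her $(n-1)$-th blue edge she has formed a blue copy of $T$; at that moment Bob holds at most $q\lfloor(n-2)/p\rfloor\leq q(n-2)/p\leq n-2$ edges (using $q\leq p$), which is strictly fewer than the $n-1$ edges needed to contain any copy of $T$, so Alice wins the race.

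The argument is nearly mechanical once the leaf-addition order is in place, and no genuine obstacle arises; the only small points requiring care are the integer identity $\lceil(n-1)/p\rceil-1=\lfloor(n-2)/p\rfloor$, the monotonicity remark reducing everything to the case $j=n-1$, and the bookkeeping confirming that Bob cannot have completed his own copy of $T$ before Alice finishes hers.
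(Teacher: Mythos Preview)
Your proof is correct and follows essentially the same approach as the paper: both build $T$ incrementally along a leaf-elimination (equivalently, DFS) ordering, and both verify that before Alice's $j$-th blue edge there are $N-j$ unused vertices against at most $q(\lceil j/p\rceil-1)$ red edges, with the critical case $j=n-1$ yielding exactly the bound $N=n+q\lfloor(n-2)/p\rfloor$. Your explicit check that Bob holds at most $q\lfloor(n-2)/p\rfloor\le n-2<n-1$ red edges when Alice finishes---and hence cannot have completed his own copy of $T$---is a point the paper leaves implicit.
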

\begin{proof}
Since Alice cannot win in the achievement game $(p,q;T,K_{n-1},+)$, we have $a(p,q;T)\geq n$. We now show the upper bound. Let $t=\lfloor\frac{n-2}{p} \rfloor$ and $N=n+qt$. It suffices to show that Alice has a winning strategy in the achievement game $(p, q; T, K_{N}, +)$.
By employing the deep-first-search algorithm on tree $T$, we can get a sequence of vertices $v_0,v_1,\ldots,v_{n-1}$.
Let $T_i$ denote the sub-tree $T_i\subseteq T$ on vertex set $\{v_0,v_1,\ldots,v_{i}\}$ for $0\leq i\leq n-1$. 
Note that $v_0$ is the root of $T$ and $e(T_i)=e(T_{i-1})+1$ for $1\le i\le n-1$. 
In the $j$-th round for $1\le j\le t$, Alice colors an edge $e_k^j$ blue in the $k$-th step for $1\le k\le p$, then Bob in turn colors an edge $f_k^j$ red in $k$-th step for $p+1\le k\le p+q$. For simplicity, the \textit{$(j,k)$-th step} means the $k$-th step in the $j$-th round of this game. After the $(j, k)$-th step, the edges in 
$\{e_{y}^{x}\,|\,1\le x \le j-1, 1\leq y\le p\}\cup 
\{e_{1}^{j},e_{2}^{j},\ldots, e_{\min\{p,k\}}^{j}\}$ 
are colored blue, and the edges in 
$\{f_{y}^{x}\,|\,1\le x \le j-1, p+1\leq y\le p+q\}\cup \{f_{p+1}^{j},f_{p+2}^{j},\ldots, f_{k}^{j}\}$ 
are colored red. In the $(j,k)$-th step, we show an Alice's strategy to form a blue copy of $T_{i}^B$ such that $e(T_i^B)=i=p(j-1)+k$, where $1\leq k\leq p$, $1\leq j\leq t$ and $1\leq i\leq n-1$.  
\setcounter{claim}{0}
\begin{claim}
\label{clm:tree}
There is a strategy for Alice such that all the $i$ blue edges induce a blue copy of $T_{i}^B$ for $1\leq i\leq n-1$. 
\end{claim}
\begin{proof}
We prove this claim by induction on $i$. If $i=1$, then $\{e_1^1\}$ induces a blue copy of $T_1^B$ in the $(1,1)$-th step.
Suppose that all the $i$ blue edges induce a blue copy of $T_{i}^B$ for $i\le n-2$.
By the deep-first-search algorithm, let $u_i\in T_i^B$ denote the father vertex of some $v_{i+1}\in T_{i+1}^B$.
Let $E_i$ be the set of all edges from $u_i$ to $ V\setminus V(T_{i}^B)$.
Clearly, $E_i$ contains only red edges and uncolored edges. Now we consider the $(j,k)$-th step for $e(T_i^B)=i=p(j-1)+k$, where $1\leq k\leq p$, $1\leq j\leq t$ and $1\leq i\leq n-1$. Since $t=\lfloor\frac{n-2}{p} \rfloor$, $N=n+qt$, and $n\geq i+2$, it follows that 
\begin{align*}
|E_i|
&= |V(K_{N})| - |V(T_{i}^B)|\\[0.2cm]
&= n+qt - |V(T_{i}^B)|\\[0.2cm]
&=n+q\lfloor (n-2)/p \rfloor-(i+1) \\[0.1cm]
&\ge i+2+q\left\lfloor i/p\right\rfloor-(i+1) \\[0.1cm]
&=1+q\lfloor (pj-p+k)/p \rfloor \\[0.1cm]
&> q(j-1)+q\lfloor k/p \rfloor.
\end{align*}
If $k<p$, then it is Alice's turn to color in $(j, k+1)$-th step. Since there are exact $q(j-1)$ red edges in the $(j,k+1)$-th step, we have $|E_i|> q(j-1)$ and Alice can color an edge in $E_i$ to form a blue copy of $T_{i+1}^B$.
If $k= p$, then it is Alice's turn to color in $(j+1,1)$-th step. Since there are exact $qj$ red edges in $(j,p+q)$-th step, we have $|E_i|> qj$ and Alice can color an edge in $E_i$ to form a blue copy of $T_{i+1}^B$. Now all the blue edges induce a blue copy of $T_{i+1}^B$ which completes the proof of the claim.
\end{proof}
Claim \ref{clm:tree} shows that when $i=n-2$, Alice can obtain a blue copy of $T_{n-1}^B$ isomorphic to $T$.
We have $a(p,q;T)\leq n+q\lfloor\frac{n-2}{p} \rfloor$.
This completes the proof.
\end{proof}

The following is an immediate corollary of Theorem \ref{th-trees}.
\begin{corollary}\label{cor3-1}
For all trees $T$ of
order $n$, we have
$n\leq {a}(T)\leq 2n-2$.
\end{corollary}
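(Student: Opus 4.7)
The plan is to derive the corollary as an immediate specialization of Theorem \ref{th-trees} to the unbiased case $p = q = 1$. By the definitions given in the introduction, the classical achievement number $a(T)$ coincides with $a(1,1;T)$, and the hypothesis $p \geq q$ of Theorem \ref{th-trees} is trivially satisfied when $p = q = 1$. I would therefore simply invoke the theorem with these parameter values, with no additional game-theoretic argument required.

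Substituting $p = q = 1$ into the upper bound $n + q\lfloor (n-2)/p\rfloor$ of Theorem \ref{th-trees} gives
\[
n + 1\cdot\left\lfloor \frac{n-2}{1}\right\rfloor \;=\; n + (n-2) \;=\; 2n-2,
\]
while the lower bound $n$ is unchanged. Combining these yields $n \leq a(T) \leq 2n-2$, which is exactly the claim of the corollary. The main obstacle, if any, is purely presentational: one should point out explicitly that the $(p,q)$-achievement game definition collapses to the classical achievement game of Harary when $p = q = 1$, so that the identification $a(1,1;T) = a(T)$ is legitimate. All the substantive content, namely the depth-first-search ordering of $V(T)$ and the counting argument showing that the set $E_i$ of candidate extending edges is large enough for Alice to extend the current blue subtree at every step, has already been handled inside the proof of Theorem \ref{th-trees}, and what remains here is a one-line arithmetic substitution.
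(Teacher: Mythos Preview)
Your proposal is correct and matches the paper's own treatment exactly: the paper states the corollary as immediate from Theorem~\ref{th-trees} with no separate proof, and your substitution $p=q=1$ together with the identification $a(T)=a(1,1;T)$ is precisely what is intended.
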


\subsection{Stars}

We first give a lower bound of $a(p,q;K_{1, n})$.

\begin{theorem}\label{th-Lower}
For any $\epsilon > 0$, there exists an integer $n_0$ such that
$$
a(p,q;K_{1, n}) \ge a^*(p,q;K_{1, n}) \ge \left(1+\frac{q}{2p} - \epsilon\right) n
$$
for any $n \ge n_0$.
\end{theorem}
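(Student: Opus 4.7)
The first inequality $a(p,q;K_{1,n}) \ge a^*(p,q;K_{1,n})$ extends Observation~\ref{obs} to biased games: any Alice-winning strategy in the strong game $(p,q;K_N,K_{1,n},+)$ also wins the weaker first-player game, since in the latter Bob is no longer permitted to win by completing his own blue star. The plan is therefore to establish the main lower bound $a^*(p,q;K_{1,n})\ge (1+q/(2p)-\epsilon)n$ by exhibiting, for every integer $N\le(1+q/(2p)-\epsilon)n$, an explicit Bob strategy in the Maker--Breaker game on $K_N$ that prevents a blue $K_{1,n}$ provided $n\ge n_0(p,q,\epsilon)$.

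The strategy is a randomized \emph{endpoint-response}: after Alice colors her $p$ edges in round $s$, the $2p$ endpoint-occurrences (counted with multiplicity) are listed, Bob picks a uniformly random $q$-subset $S_s$, and for each chosen occurrence at a vertex $v$ he colors red an arbitrary uncolored edge incident to $v$ (if $v$ is already saturated, he plays any other uncolored edge; see the final remark). Let $r_B(v,t)$ count Bob's responses targeted at $v$ through round $t$; clearly $d_R(v,t)\ge r_B(v,t)$. Conditional on Alice's play, the number of round-$s$ responses at $v$ is hypergeometric with mean $\tfrac{q}{2p}\,a(v,s)$, where $a(v,s)$ is the number of Alice endpoint-occurrences at $v$ in round $s$, and hence $\mathbb{E}[r_B(v,t)]=\tfrac{q}{2p}\,d_B(v,t)$.

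The crux is uniform concentration. The centred process $M_t:=r_B(v,t)-\tfrac{q}{2p}d_B(v,t)$ is a martingale with $O(1)$-bounded increments whose conditional quadratic variation is at most $\tfrac{q}{2p}\,d_B(v,t)\le \tfrac{q}{2p}(N-1)=O(N)$; a variance-sensitive martingale inequality (Freedman or Bernstein) then yields $\Pr[M_t\le -\Delta]\le \exp(-c\Delta^2/N)$ for a constant $c=c(p,q)>0$. Taking $\Delta=C\sqrt{N\log N}$ with $C$ sufficiently large and union-bounding over $v\in V(K_N)$ and $t\le\binom{N}{2}$, the event
\[
d_R(v,t)\;\ge\;\tfrac{q}{2p}\,d_B(v,t)-C\sqrt{N\log N}\qquad\text{for every }v\text{ and every }t
\]
holds with positive probability; the method of conditional expectations then produces a deterministic Bob strategy with the same uniform guarantee.

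Combining the above with the trivial bound $d_B(v,t)+d_R(v,t)\le N-1$ gives $d_B(v,t)\le \bigl(N-1+C\sqrt{N\log N}\bigr)/\bigl(1+q/(2p)\bigr)$. For $N\le(1+q/(2p)-\epsilon)n$ the right-hand side equals $n-\Theta(\epsilon n)+O(\sqrt{n\log n})$, strictly less than $n$ as soon as $n\ge n_0(p,q,\epsilon)$; thus $d_B(v,t)<n$ throughout the game, Bob wins, and $a^*(p,q;K_{1,n})>N$ as required. The main obstacle is the uniform concentration step --- controlling the degree ratio simultaneously at every vertex across the $\Theta(N^2)$ rounds despite the within-round hypergeometric dependence of Bob's choices; the saturation caveat is harmless since a saturated $v$ can accumulate no further blue edges, so Bob loses nothing by redirecting his forced-idle responses.
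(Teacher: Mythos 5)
Your proposal is correct in substance but follows a genuinely different route from the paper. Both arguments rest on the same heuristic --- Bob should answer Alice's moves so that every vertex accumulates red degree at rate roughly $\tfrac{q}{2p}$ times its blue degree, which combined with $d_B(v)+d_R(v)\le N-1$ forces $d_B(v)\lesssim N/(1+\tfrac{q}{2p})<n$ --- but they implement it differently. The paper derandomizes this heuristic up front in the style of Beck's biased Erd\H{o}s--Selfridge arguments: it fixes $\beta=\tfrac{2p+\epsilon}{q}$ and an exponential potential $\phi(v)=\alpha^{d_B(v)-\beta d_R(v)}$, has Bob play greedily at the vertex of maximum potential, shows the total potential is monotone decreasing (Claim~\ref{clm:star02}), and reads off the deterministic degree bound $d_B(v)-\beta d_R(v)\le\log_\alpha N+p$ from $\sum_v\phi(v)\le N$. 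You instead keep the strategy randomized (a uniformly random $q$-subset of Alice's $2p$ endpoint occurrences each round) and prove the degree-ratio guarantee by martingale concentration; you correctly identify that a variance-sensitive inequality (Freedman) is essential here, since the conditional quadratic variation of $r_B(v,\cdot)-\tfrac{q}{2p}d_B(v,\cdot)$ is $O(N)$ rather than the $\Theta(N^2)$ that the round count would suggest, so the deviation is $O(\sqrt{N\log N})=o(\epsilon n)$ and survives the union bound. The paper's approach buys an explicit deterministic strategy and an $O(\log N)$ rather than $O(\sqrt{N\log N})$ error term (both immaterial for the stated bound); yours is arguably more transparent about where the constant $1+\tfrac{q}{2p}$ comes from. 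Two small points of hygiene: the phrase ``method of conditional expectations'' is not quite right as stated, because the failure probability being conditioned on depends on Alice's (adversarial, unspecified) future play --- but you do not actually need a deterministic Bob strategy: since your randomized Bob defeats every fixed deterministic Alice strategy with positive probability, no Alice strategy is winning on $K_N$, which is exactly what $a^*(p,q;K_{1,n})>N$ requires (and determinacy of finite games then hands Bob a deterministic strategy for free). Second, when bounding $d_R(v,t)\ge r_B(v,t)-O(q)$ you should count $r_B$ as \emph{targeted} responses so the martingale is clean, and note that a targeted response can fail to land at $v$ only in the single round in which $v$ saturates (after which $a(v,s)=0$), so the loss is an additive $O(q)$ absorbed into the slack; your closing remark gestures at this correctly.
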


\begin{proof}
Let $\beta = \frac{2p + \epsilon}{q} $. 
We first show the following claim. 

\begin{claim}
\label{claim1}
There exists a real number $\alpha$ such that $\alpha > 1$ and $2p-1 + \alpha^{-\beta q} - 2p \alpha^{-1}< 0$. 
\end{claim}

\begin{proof}
Consider the function $f(x) = 2p-1 + x^{-\beta q} - 2px^{-1} $. 
Since $f(1) = 0$ and $f'(1) = - \beta q + 2p = - \epsilon < 0$, 
for a sufficiently small positive real number $\delta > 0$, it holds that $f(1+\delta) < 0$. 
Then, $\alpha = 1+\delta$ satisfies the condition in the claim. 
\end{proof}

Let $\alpha$ be a real number as in Claim \ref{claim1}. 
Let $n_0$ be a sufficiently large integer such that 
$$
\log_\alpha \left(\frac{(2p+q)n}{2p}\right) + p \le \left(\frac{2p^2-2p+q}{2pq}\right)\epsilon n
$$ 
for any $n \ge n_0$. 

For $n \ge n_0$, we show that Alice cannot win the game in $K_N$, where $N = {\lfloor (1 + \frac{q}{2p} - \epsilon) n \rfloor}$, if Bob adopts an appropriate strategy.  
Let $V$ denote the vertex set of the complete graph $K_N$. 

For each integer $i \ge 1$ and for $v \in V$, 
let $d^{i, 1}_B(v)$ (resp.~$d^{i, 1}_R(v)$) denote the set of blue (resp.~red) edges incident to $v$ 
just after Alice colors $p$ edges in the $i$-th round. 
Similarly, for each integer $i \ge 1$ and for $v \in V$, 
let $d^{i, 2}_B(v)$ (resp.~$d^{i, 2}_R(v)$) denote the set of blue (resp.~red) edges incident to $v$ 
just after Bob colors $q$ edges in the $i$-th round. 
We also denote $d^{0, 2}_B(v) = d^{0, 2}_R(v) = 0$ for $v \in V$. 
Since $K_N$ has ${N \choose 2}$ edges, $d^{i, j}_B(v)$ and $d^{i, j}_R(v)$ are defined 
for a pair of integers $i \ge 0$ and $j \in \{1, 2\}$ such that $0 \le (p+q)(i-1) + p \le {N \choose 2}$ if $j=1$; $0 \le (p+q)(i-1) + (p+q) \le {N \choose 2}$ if $j=2$. 
For such a pair of integers $i, j$, we define 
$$
\phi^{i,j}(v) = 
\begin{cases}
0 & \mbox{if $d^{i,j}_B(v) + d^{i,j}_R(v) = N-1$,} \\
\alpha^{d^{i,j}_B(v) - \beta \cdot d^{i,j}_R(v)} & \mbox{otherwise.}
\end{cases}
$$
Suppose that Bob takes the following strategy: 
\begin{quote}
In round $i$ with $1 \le i \le \lfloor {N \choose 2} /(p+q) \rfloor$, for a vertex $w$ maximizing $\phi^{i,1}(w)$, Bob chooses uncolored edges incident to $w$ if possible.  
\end{quote}

Note that, if the graph has an uncolored edge, then we have $\phi^{i,1}(w) > 0$. 
Let $l_i$ denote the number of uncolored edges incident to $w$ in the $i$-th round.
If $l_i\ge q$, then Bob chooses $q$ uncolored edges incident to $w$.
If $l_i < q$, then Bob chooses $q$ uncolored edges such that $l_i$ of these edges are incident to $w$.
We now prove the following claims. 

\begin{claim}
\label{clm:star02}
For each $i$ with $0 \le i \le \lfloor {N \choose 2} /(p+q)) \rfloor - 1$,  we have that $\sum_{v \in V} \phi^{i+1,2}(v) < \sum_{v \in V} \phi^{i,2}(v)$. 
\end{claim}

\begin{proof}
Fix $i$ and let $G_{i+1}$ be the graph induced by the $p$ edges that Alice chooses in the $(i+1)$-th round. 
We denote the degree of a vertex $u \in G_{i+1}$ by $m_{i+1}(u)$.
Let $w$ be a vertex maximizing $\phi^{i,1}(w)$, 
which implies that Bob chooses at least one edge incident to $w$ in the $(i+1)$-th round.  
Consider the function $h(x)=m x^{-1}-x^{-m}-m+1$ for $m\geq 1$. Since $h(1)=0$, and $h^\prime(x)=-mx^{-2}+mx^{-m-1}\le 0$ for $x\geq 1$. It holds that $h(\alpha)\le 0$. Let $m= m_{i+1}(u)$, we have 
$1-\alpha^{- m_{i+1}(u)}\le  m_{i+1}(u)(1-\alpha^{-1})$,
for each $u\in G_{i+1}$.
Since 
\begin{align*}
& \alpha^{- m_{i+1}(u)} \cdot \phi^{i+1, 1} (u) \le \phi^{i, 2}(u) \quad \mbox{for $u \in G_{i+1}$},  \\
& \phi^{i+1, 1}(v) = \phi^{i, 2}(v) \quad \mbox{for $v \in V \setminus V(G_{i+1})$}, 
\end{align*}
we obtain 
\begin{equation}\label{eq:star01}
\begin{aligned}
\sum_{v \in V} \phi^{i+1, 1}(v) - \sum_{v \in V} \phi^{i, 2}(v) 
&\le \sum_{u \in G_{i+1}}(1 - \alpha^{- m_{i+1}(u)}) \cdot \phi^{i+1, 1}(u) \\ 
&\le \sum_{u \in G_{i+1}}m_{i+1}(u) \cdot (1-\alpha^{-1}) \cdot \phi^{i+1, 1}(w) \\ 
&=  (2p-2p\alpha^{-1})\cdot \phi^{i+1, 1}(w).
\end{aligned}
\end{equation}
Furthermore, since 
\begin{align*}
& \phi^{i+1, 2} (w) \le \alpha^{- \beta q} \cdot \phi^{i+1, 1}(w), \\
& \phi^{i+1, 2} (v) \le \phi^{i+1, 1} (v) \quad \mbox{for $v \in V \setminus \{w \}$}, 
\end{align*}
we obtain 
\begin{equation}\label{eq:star02}
\sum_{v \in V} \phi^{i+1, 2}(v) - \sum_{v \in V} \phi^{i+1, 1}(v) \le  (\alpha^{- \beta q} - 1)  \phi^{i+1, 1}(w). 
\end{equation}
By (\ref{eq:star01}) and (\ref{eq:star02}), we have that 
\begin{align*}
\sum_{v \in V} \phi^{i+1, 2}(v) - \sum_{v \in V} \phi^{i, 2}(v) 
\le (2p -1 + \alpha^{- \beta q} - 2p\alpha^{- 1})  \phi^{i+1, 1}(w) < 0, 
\end{align*}
where the last inequality is by the choice of $\alpha$. 
This completes the proof of the claim. 
\end{proof}

\begin{claim}
\label{clm:star03}
For each $i$ with $0 \le i \le \lfloor {N \choose 2} /(p+q)) \rfloor$ and for $v \in V$, 
we have that $\phi^{i, 2}(v) \le N$. 
\end{claim}

\begin{proof}
Since $\phi^{0, 2}(v)=1$ for every $v \in V$, we see that $\sum_{v \in V} \phi^{0, 2}(v) = N$. 
This together with Claim~\ref{clm:star02} shows that $\sum_{v \in V} \phi^{i, 2}(v) \le N$ for every $i \ge 0$. 
Therefore, $\phi^{i, 2}(v) \le N$ for any $i$ and for any $v \in V$. 
\end{proof}

\begin{claim}
\label{clm:star04}
For each pair $i, j$ and for $v \in V$, it holds that $d^{i,j}_B(v) - \beta \cdot d^{i,j}_R(v) \le \log_\alpha N + p$. 
\end{claim}

\begin{proof}
Fix $i, j$, and $v$. Since the claim is obvious when $i=0$, we only consider the case when $i \ge 1$. 
Let $i^*$ be the maximum index such that $i^* \le i -1$ and $d^{i^*,2}_B(v) + d^{i^*,2}_R(v) \le N-2$. 
Then, since $d^{i,j}_B(v) \le d^{i^*,2}_B(v) + p$ and $d^{i,j}_R(v) \ge d^{i^*,2}_R(v)$, 
we have that
$$
d^{i,j}_B(v) - \beta \cdot d^{i,j}_R(v) \le d^{i^*,2}_B(v) - \beta \cdot d^{i^*,2}_R(v) + p. 
$$
On the other hand, Claim~\ref{clm:star03} shows that  
$$
d^{i^*,2}_B(v) - \beta \cdot d^{i^*,2}_R(v) = \log_\alpha \phi^{i^*,2}(v) \le \log_\alpha N. 
$$
By combining these inequalities, we obtain the claim. 
\end{proof}

\begin{claim}
\label{clm:star05}
For each pair $i, j$ and for $v \in V$, we have that $d^{i,j}_B(v) < n$. 
\end{claim}

\begin{proof}
Fix $i, j$, and $v$. 
Since $d^{i,j}_B(v) - \beta \cdot d^{i,j}_R(v) \le \log_\alpha N + p$ by Claim~\ref{clm:star04} and 
$d^{i,j}_B(v) + d^{i,j}_R(v) < N$, 
we obtain 
$$
(1+\beta) d^{i,j}_B(v) < \beta N + \log_\alpha N + p. 
$$
Since $\beta = \frac{2p + \epsilon}{q} $, $N \le (1+\frac{q}{2p} - \epsilon) n$, and 
$\log_\alpha N + p \le \log_\alpha (\frac{(2p+q)n}{2p}) + p \le (\frac{2p^2-2p+q}{2pq})\epsilon n$,  
we obtain 
\begin{align*}
d^{i,j}_B(v) 
&< \frac{1}{1+\beta} \left( \beta N + \log_\alpha N + 1 \right) \\[0.2cm] 
&\le \frac{q}{q+2p+\epsilon} \left( \frac{2p+\epsilon}{q}  \left(\frac{2p+q}{2p}-\epsilon\right) n + \left(\frac{2p^2-2p+q}{2pq}\right)\epsilon n \right) \\[0.2cm] 
&= \frac{2p(2p+q)+(2p+q-4p^2)\epsilon - 2p \epsilon^2}{2p(q+2p)+2p\epsilon} n \\[0.2cm] 
&< \frac{2p(2p+q)+(3p-4p^2)\epsilon}{2p(q+2p)+2p\epsilon} n \\[0.2cm] 
&< n,
\end{align*}
which shows the claim. 
\end{proof}

Claim~\ref{clm:star05} shows that there exists no blue $K_{1,n}$ during the game, which means that Alice cannot win. 
Therefore, $a(p,q;K_{1, n}) \ge a^*(p,q;K_{1, n}) \ge (1+\frac{q}{2p} - \epsilon) n$. 
\end{proof}

The discrepancy game is a positional game introduced by Alon, Krivelevich, Spencer, and Szab\'o \cite{AKSS05}.
Balancer and Unbalancer, take turns in occupying a previously unoccupied element of the “board” $V$.
Balancer labels each of his vertices by $+1$, Unbalancer’s labels are $-1$. Let $f: V \rightarrow \{-1, +1\}$ be
the labeling in the end of the game. Define $f(e_j) = \sum_ {v\in e_j}
f(v)$. Then the game is won by
Balancer if and only if $|f(e_j )| \le b_j$
for every $j = 1, \ldots, m.$
Alon et al. obtained the following result.

\begin{theorem}{\upshape \cite{AKSS05}}\label{th-Alon}
Let $H = (V, E)$ be a hypergraph with edge set $E = \{e_1,\ldots,e_m\}$. Let further
$b=(b_1,\ldots,b_m)$ be a target vector. Assume that $|V|$ is even and Balancer moves first.
Then he has a winning strategy for $(H, b)$ provided
$$\sum\limits_{i=1}^{m}e^{-\frac{b_i^2}{2|e_i|}}\le \frac{1}{2}.$$
In particular, Balancer has a winning strategy if $b_j \le
 \sqrt {2\ln(2m)|e_j|}$ for $j = 1,\ldots,m$.
\end{theorem}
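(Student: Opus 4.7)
The plan is to prove Theorem~\ref{th-Alon} by the \emph{weight function method}, essentially a derandomization of the Chernoff--Hoeffding inequality in the spirit of Erd\H{o}s--Selfridge's criterion for Maker--Breaker games, adapted here to the two-sided balancing setting. The second assertion ($b_j\le\sqrt{2\ln(2m)|e_j|}$) follows from the first by substitution: it forces $e^{-b_j^2/(2|e_j|)}\le\tfrac{1}{2m}$ and hence $\sum_j e^{-b_j^2/(2|e_j|)}\le\tfrac12$. So the task reduces to exhibiting a winning strategy for Balancer under the hypothesis $\sum_j e^{-b_j^2/(2|e_j|)}\le\tfrac12$.

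I would introduce a potential function that tracks, at each state, a Chernoff-type upper bound on the probability that some edge $e_j$ still ends with $|f(e_j)|>b_j$ if the remaining labels were assigned by independent fair coin flips. Concretely, set $\lambda_j:=b_j/|e_j|$, and for a partial labeling with current sum $s_j$ on $e_j$ and $r_j$ still-unlabeled vertices of $e_j$, put
$$
\Phi_j \;:=\; e^{-\lambda_j b_j}\bigl(e^{\lambda_j s_j}+e^{-\lambda_j s_j}\bigr)\cosh(\lambda_j)^{r_j}, \qquad \Phi:=\sum_{j=1}^m \Phi_j.
$$
The boundary bookkeeping is direct. Using $\cosh x\le e^{x^2/2}$ one obtains $\Phi(\text{start})\le 2\sum_j e^{-b_j^2/(2|e_j|)}\le 1$ by hypothesis. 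At the end of the game $r_j=0$ for every $j$ and $\Phi_j(\text{end})=e^{\lambda_j(s_j-b_j)}+e^{-\lambda_j(s_j+b_j)}$, which exceeds $1$ whenever $|s_j|>b_j$. Hence if I can keep $\Phi<1$ throughout via Balancer's play, then every edge will satisfy $|f(e_j)|\le b_j$ at the end.

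The heart of the argument is designing a strategy for Balancer making $\Phi$ nonincreasing across each round of two consecutive moves. The driving identity is
$$
\frac{e^{\lambda_j}+e^{-\lambda_j}}{\cosh(\lambda_j)}=2,
$$
which, applied term by term, shows that $\Phi_j$ is a martingale under the fair-coin random-labeling rule: for any unlabeled vertex $w$ and any $e_j\ni w$, the changes in $\Phi_j$ caused by labeling $w$ with $+1$ and with $-1$ are exact negatives of each other. Balancer's strategy is greedy: at his turn he picks the vertex $v$ minimizing the resulting $\Phi$ after setting $f(v)=+1$. Using the martingale identity together with the ``Balancer moves first'' assumption and the evenness of $|V|$, one can pair each of Balancer's moves with Unbalancer's ensuing move and argue that the greedy decrease Balancer obtains on his move dominates any increase Unbalancer can produce on the adjacent one, so $\Phi$ does not rise on net across the pair.

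The main obstacle I anticipate is precisely this pairing step, because of the asymmetry: Balancer is forced to play label $+1$, whereas $\Phi$ penalizes deviation in both directions. The fix is to express the change in $\Phi$ as an inner product between the ``move direction'' and a signed gradient vector determined by the current $(s_j,r_j)$, and then exploit the sign-cancellation identity above to dominate Unbalancer's contribution by Balancer's greedy choice on the residual unlabeled set. This is the technical crux where the factor of $\tfrac12$ in the hypothesis, rather than $1$, is consumed.
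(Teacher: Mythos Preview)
The paper does not contain a proof of Theorem~\ref{th-Alon}: it is quoted from \cite{AKSS05} and then invoked as a black box in the proof of Theorem~\ref{th-Star-bounds}. There is therefore nothing in this paper to compare your attempt against.

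That said, your exponential-weight potential $\Phi$ with $\lambda_j=b_j/|e_j|$ is exactly the device used in \cite{AKSS05}, and the start/end bounds you state are correct. Two remarks. First, in the ``in particular'' clause the inequality must read $b_j\ge\sqrt{2|e_j|\ln(2m)}$, not $\le$ (a transcription slip in the paper which you copied); only a lower bound on $b_j$ forces $e^{-b_j^2/(2|e_j|)}\le 1/(2m)$. Second, your diagnosis of the pairing step is slightly off: the factor $\tfrac12$ is already fully spent in bounding $\Phi(\text{start})\le 1$ (the extra $2$ there comes from $e^{0}+e^{0}$) and plays no further role. What actually closes the pairing argument is a second-order inequality. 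Split your $\Phi_j$ as $\Phi_j^{+}+\Phi_j^{-}$ with $\Phi_j^{\pm}=e^{-\lambda_j b_j\pm\lambda_j s_j}\cosh(\lambda_j)^{r_j}$, and write $D(w)=\sum_{j\ni w}\tanh(\lambda_j)\bigl(\Phi_j^{+}-\Phi_j^{-}\bigr)$ for the change in $\Phi$ when $w$ receives label $+1$. Labelling $v$ with $+1$ can only increase $D(w)$, since the cross-contribution equals $\sum_{j\ni v,w}\tanh^2(\lambda_j)\bigl(\Phi_j^{+}+\Phi_j^{-}\bigr)\ge 0$. Hence, after Balancer greedily plays $+1$ at the vertex $v$ minimizing $D$, Unbalancer's $-1$ at any remaining $w$ changes $\Phi$ by $-D'(w)\le -D(w)\le -D(v)$, and the net change over the round is nonpositive.
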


Alon et al. \cite{AKSS05} thought it would be very interesting to Balencer/Unbalencer $(p, q)$-game but they do not know how to generalize their approach of random structures in \cite{AKSS05} to the biased game on hypergraphs. This is one reason why we study
the generalized $(p,q)$-achievement games in this paper.

For stars, the following bounds are from Theorems \ref{th-trees} and \ref{th-Alon}.
\begin{theorem}\label{th-Star-bounds}
For $n\geq 3$, we have
$$
2n-2-\sqrt{(4n-8)\ln (4n-4)}\leq a(K_{1,n-1})\leq 2n-2.
$$
\end{theorem}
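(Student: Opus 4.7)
The upper bound $a(K_{1,n-1}) \leq 2n - 2$ is immediate from Corollary~\ref{cor3-1} applied to the star $K_{1,n-1}$, which is a tree on $n$ vertices, so all of the work lies in the lower bound.

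For the lower bound, my plan is to recast Bob's task of preventing a blue $K_{1,n-1}$ as a Balancer/Unbalancer discrepancy game and then invoke Theorem~\ref{th-Alon}. Play the achievement game on $K_N$ with vertex set $W$, and consider the hypergraph $H = (V, \mathcal{E})$ whose board is $V = E(K_N)$ (of size $\binom{N}{2}$) and whose hyperedges are the vertex-stars $e_v = \{uv : u \in W\setminus\{v\}\}$ for $v \in W$, so that $m = N$ and $|e_v| = N-1$ for every $v$. Let Bob (red) play Balancer, labelling each of his edges $+1$, and Alice (blue) play Unbalancer, labelling each of hers $-1$. At any stage of the game the blue-degree of $v$ is at most $\tfrac{(N-1) - f(e_v)}{2}$, so if Bob can maintain $|f(e_v)| \leq b$ for every $v$ throughout the game, then every blue-degree stays strictly below $n-1$ as soon as $N + b \leq 2n - 3$, which prevents any blue $K_{1,n-1}$ from ever forming.

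Theorem~\ref{th-Alon} hands Bob such a balancing strategy with target $b = \sqrt{2(N-1)\ln(2N)}$, since all hyperedges share the common size $N-1$ and $m = N$. To extract the stated bound, I would set $s = \sqrt{(4n-8)\ln(4n-4)}$ and take $N = \lceil 2n - 3 - s \rceil$, so that $N \leq 2n - 2 - s$; the monotone estimates $N - 1 \leq 2n - 3 - s$ and $2N \leq 4n - 4 - 2s$ inside the square root, combined with a short convexity/monotonicity check, then give $\sqrt{2(N-1)\ln(2N)} \leq s - 1 \leq 2n - 3 - N$, so the invariant condition $N + b \leq 2n - 3$ is met. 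Hence Bob's strategy succeeds on $K_N$, Alice has no winning strategy there, and $a(K_{1,n-1}) \geq N + 1 \geq 2n - 2 - s$.

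The main technical obstacle is reconciling the hypotheses of Theorem~\ref{th-Alon} with our setting: that theorem assumes $|V|$ is even and Balancer moves first, whereas in the achievement game Alice moves first and $|V| = \binom{N}{2}$ need not be even. I would handle the move-order mismatch by granting Bob a single phantom $+1$ label at the very start, which perturbs each $f(e_v)$ by at most one and is absorbed by enlarging $b$ by one, and the parity issue by appending a single never-played dummy element to $V$. The genuinely delicate point is verifying that after these $\pm 1$ bookkeeping shifts the calibration still yields exactly the coefficients $(4n-8)$ and $(4n-4)$ under the square root, rather than a strictly weaker expression such as one with $4n$; this is the piece of the argument that requires the most care.
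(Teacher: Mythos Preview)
Your approach is essentially the paper's: both obtain the upper bound from Corollary~\ref{cor3-1} and the lower bound by casting Bob as Balancer in the discrepancy game on the star hypergraph of $K_N$ and invoking Theorem~\ref{th-Alon}. The only difference is in the bookkeeping for move order and parity: instead of your phantom move and dummy element, the paper deletes Alice's first edge $e$ (and, when $\binom{N}{2}$ is even, one further edge $f$) from the board so that Bob genuinely moves first on an even-sized board, and then absorbs the $\pm 1$ perturbation from the deleted edge(s) into the final discrepancy bound---this is a slightly cleaner device than the phantom-label trick (strategy-stealing for Balancer is not automatic, since an extra $+1$ can hurt him), but it costs the same slack and yields the calibration you outline.
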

\begin{proof}
The upper bound follows from Corollary \ref{cor3-1}. To show the lower bound, Alice chooses an edge $e$ from $K_N$ where $N=2n-2-\sqrt{(4n-8)\ln (4n-4)}$ and colors it blue. If ${N\choose 2}$ is odd, then we construct a hypergraph $\mathcal{H}=(V(\mathcal{H}),E(\mathcal{H}))$ such that $V(\mathcal{H})=E(K_N\setminus e)$ and $E(\mathcal{H})$ is the set of the subgraphs of $K_N\setminus e$ induced by the edges in $E[v,N_{K_N\setminus e}(v)]$ for each $v\in V(K_{N}\setminus e)$. If ${N\choose 2}$ is even, then we construct a hypergraph $\mathcal{H}=(V(\mathcal{H}),E(\mathcal{H}))$
such that $V(\mathcal{H})=E(K_N\setminus \{e,f\})$ and $E(\mathcal{H})$ is the set of the subgraphs of $K_N\setminus \{e,f\}$ induced by the edges in $E[v,N_{K_N\setminus \{e,f\}}(v)]$ for each $v\in V(K_{N}\setminus \{e,f\})$.

Let $E(\mathcal{H})=\{e_1,\ldots,e_N\}$ and
$b=(b_1,\ldots,b_N)$
such that $b_i=\sqrt{2|e_i|\ln (2N)}$. 
It is clear that 
$$\sum\limits_{i=1}^{N}e^{-\frac{b_i^2}{2|e_i|}}\le \frac{1}{2}.$$

From Theorem \ref{th-Alon}, Bob (corresponding to Balancer) moves first, and he has a winning strategy in $(\mathcal{H}, b)$. 
For each $v_i\in V(K_{N})$ (corresponding to $e_i$ in $(\mathcal{H}, b)$), if $v_i$ is incident with $e$ or $f$, then $d_{B}(v_i)+d_{R}(v_i)=N-1$ and $|d_{B}(v_i)-d_{R}(v_i)|\leq b_i+1$; if $v_i$ is not incident with $e$ or $f$, then $d_{B}(v_i)+d_{R}(v_i)=N-1$ and $|d_{B}(v_i)-d_{R}(v_i)|\leq b_i$. 
Then we have that $d_{B}(v_i)<n-1$ and $d_{R}(v_i)<n-1$. 
Since neither player achieves monochromatic star $K_{1,n-1}$, it follows that the game ends in a draw.
\end{proof}

\subsection{Paths}

From now on, we determine the exact values of ${a}(P_n)$. Suppose that $V(K_{n}) = \{ v_1, v_2, \ldots, v_{n}\}$. Recall that $e_i$ (resp. $f_i$) is the edge chosen by Alice (resp. Bob) in the $i$-th round, and $B_i$ (resp. $R_i$) is the set of blue edges (resp. red edges) after $i$ rounds. We will use $d^b(u)$ ($d^r(u)$) to denote the number of blue (red) edges incident to the vertex $u$.

\begin{theorem}\label{thm:pn-1}
For $n\geq 3$, we have
$$
{a}(P_n)=
\begin{cases}
3, & n=3;\\
5, & n=4;\\
n, & n\geq 5.
\end{cases}
$$
\end{theorem}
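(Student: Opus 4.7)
The lower bound $a(P_n) \geq n$ is trivial since $P_n$ needs $n$ vertices, so the work is all in the upper bound, which I will settle in cases. For $n=3$, any two edges of $K_3$ share a vertex, so Alice's two blue edges automatically form a $P_3$. For $n=4$, the inequality $a(P_4) > 4$ will follow from a drawing strategy for Bob on $K_4$: after $e_1 = v_1 v_2$ he plays the disjoint edge $v_3 v_4$; in round 2 he plays the single edge whose blue presence would let Alice complete a $P_4$ in round 3, forcing Alice's three blue edges to form a triangle or a $K_{1,3}$. The upper bound $a(P_4) \leq 5$ on $K_5$ follows by a greedy extension argument: in round 2 Alice has 6 extensions from her $P_2$ against only 1 red edge, so she reaches a blue $P_3$; in round 3 she has 4 extensions from her $P_3$ against at most 2 red edges, so she completes $P_4$ before Bob does.

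For $n \geq 5$ the main task is $a(P_n) \leq n$, i.e.\ that Alice wins on $K_n$. My planned strategy begins with a ``two disjoint edges'' seed: in round 1 Alice plays $e_1 = v_1 v_2$, and in round 2 she plays any edge $e_2$ disjoint from both $e_1$ and $f_1$, which is possible whenever $n \geq 5$. From round 3 to round $n-2$ she adaptively extends one of her two path fragments at an endpoint by a new vertex; in round $n-1$ she plays a merge edge joining the two fragments into a single $P_n$. The invariant I want to maintain is that after every move of Alice there is still at least one Hamiltonian path of $K_n$ which contains all her current blue edges and avoids all red edges; Alice chooses her move to preserve this, and since her winning condition is exactly such a Hamiltonian path in blue, the game ends with Alice's blue $P_n$ in round $n-1$, at which point Bob has only $n-2$ red edges and cannot yet have formed $P_n$ himself.

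The main obstacle is the final merge round, where only four candidate edges---one for each pair of a fragment-1 endpoint with a fragment-2 endpoint---can close Alice's blue graph into a $P_n$. For $n \geq 6$ Bob has enough red edges that in principle he could cover all four. My plan to overcome this is to show that Alice, by adaptively choosing which fragment (and which endpoint of that fragment) to extend during rounds $3, \ldots, n-2$, can always vary the final fragment-endpoint set so that Bob's committed red edges miss at least one merge edge. A counting argument tracking the number of viable Hamiltonian-path completions after each move---which starts large thanks to the disjoint-edge seed---will make this quantitative; carrying this count through the late rounds is the delicate step, and once it is done, combining with the trivial lower bound yields $a(P_n) = n$ for all $n \geq 5$.
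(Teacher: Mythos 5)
Your treatment of $n=3$ and both bounds for $n=4$ is sound (the greedy count on $K_5$ does close the $P_4$ upper bound, and your pairing strategy for Bob on $K_4$ is essentially the paper's). The main case $n\geq 5$, however, is not a proof but a plan, and two of its steps are genuinely broken or missing. First, the seed step already fails for $n=5$: if $e_1=v_1v_2$ and Bob answers $f_1=v_3v_4$, then no edge of $K_5$ is disjoint from both $e_1$ and $f_1$ (the only candidates $v_3v_5$ and $v_4v_5$ each meet $f_1$), so the claim that such an $e_2$ exists ``whenever $n\geq 5$'' is false; the paper's $n=5$ strategy only requires $e_2$ disjoint from $e_1$ with $|V(e_2)\cap V(f_1)|=1$ and then finishes by explicit case analysis. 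Second, and more seriously, the endgame of your two-fragment strategy is exactly the hard content of the theorem and you explicitly defer it (``carrying this count through the late rounds is the delicate step, and once it is done\ldots''). The invariant you propose --- after each of Alice's moves at least one Hamiltonian path of $K_n$ contains all blue edges and avoids all red edges --- is not maintainable as stated, because Bob moves after Alice and a single red edge can kill the last surviving completion; you would need a quantitative strengthening (e.g.\ several completions not blockable by one edge) together with an argument that Alice can preserve it on the tight board $K_n$, where by round $n-1$ Bob holds $n-2$ red edges against only four merge candidates. No such argument is given.

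For comparison, the paper sidesteps the merge problem entirely: Alice grows a \emph{single} path while maintaining a concrete invariant (Property~1: at most one red edge from one endpoint of the current path into the unused vertices, none from the other endpoint, and no red edge spanned by the unused vertices), stops at a blue $P_{n-3}$ with three spare vertices, and closes with an explicit finite case analysis on the last five vertices. Until you supply a mechanism of comparable strength for controlling the final connection, your argument does not establish $a(P_n)\le n$ for $n\ge 5$.
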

\begin{proof}
For $n=3$, the proof is clear.
For the lower bound of $n=4$, we can prove that Alice does not have a winning strategy in $(P_4,K_4,+)$ as follows: Alice chooses $e_1=v_1v_2$, then Bob chooses $f_1=v_3v_4$, and then Alice chooses $e_2=v_1v_4$, and hence $f_2=v_2v_3$, and so the blue $P_4$ cannot be constructed regardless of Alice's moves.

For the upper bound of $n=4$, consider Alice's winning strategy in $(P_4,K_{5},+)$ as follows: Alice chooses $e_1=v_1v_2$, then Alice chooses $e_2=uv$ such that $d^b(u)=d^b(v)=0$ and $|(V(e_2)\cup V(e_1))\cap V(f_1)|=1$ regardless of Bob's moves. By the symmetry, we can assume that $e_2=v_4v_5$ and $f_1=v_3v_4$. So Alice can construct a blue $P_4$ by choosing $e_3=v_1v_4$, $v_1v_5$, $v_2v_4$ or $v_2v_5$ to connect $e_1$ with $e_2$, regardless of Bob's moves.

For the lower bound of $n=5$, the proof is clear. For the upper bound of $n=5$, consider Alice's winning strategy in $(P_5,K_{5},+)$ as follows: Alice chooses $e_1=v_1v_2$. By the symmetry, we can assume that $e_2=v_4v_5$ and $f_1=v_3v_4$. By the symmetry, we can only consider the cases of $f_2=v_3v_5$ or $f_2\neq v_3v_5$.
If $f_2=v_3v_5$, then Alice chooses $e_3=v_1v_3$, and hence Alice chooses $e_4\in\{v_2v_4,v_2v_5\}\setminus\{f_3\}$; If $f_2\neq v_3v_5$, then Alice chooses $e_3=v_3v_5$, and hence Alice chooses $e_4\in\{v_1v_3,v_1v_4,v_2v_3,v_2v_4\}\setminus \{f_2^2,f_2^3\}$.

For $n\geq 6$, the lower bound is clear.
For the upper bound of $n\geq 6$, we consider Alice's winning strategy in $(P_n,K_{n},+)$ on the $(n-1)$-th round. We will first prove a stronger statement that Alice has a winning strategy in $(P_{p+1},K_{n},+)$ on the $p$-th round, for each $p \ (p+1\leq n-3)$. Let $u^p,v^{p}$ be the two endpoints of $P_{p+1}$ on the $p$-th round.

Let $F^p,H^p$ be the subgraph induced by the vertices in  $(V(K_{n})\setminus V(P_{p+1}))\cup \{u_1,u_{p+1}\}$ and $(V(K_{n})\setminus V(P_{p+1}))$, respectively.
Let $a_p^i$ and $b_p^i$ be the number of red edges incident to the endpoints $u^p$, $v^{p}$ of $P_{p+1}$ in $F^p$ on the $i$-step of the $p$-th round, respectively, and let
$d_p^i$ be the number of red $K_2$ in $H^p$, where $i=1,2$. Then $a^1_{p+1}=a^2_p$ or $a^2_p-1$, and $b^1_{p+1}=b^2_p$ or $b^2_p-1$, and $d^1_{p+1}=d^2_p$ or $d^2_p-1$.

There are two steps in each round of this coloring such that Alice colors blue firstly and Bob colors red after. It suffices to show the following property.
\begin{property}\label{p1}
Alice can make a blue path $P_{p+2}$ such that $a_{p+1}^1\leq 1$, $b_{p+1}^1=d_{p+1}^1=0$ on the $(p+1)$-th round for each $p \ (2\le p+2\leq n-3)$.
\end{property}
\begin{proof}
We prove this property by induction on $p$. For $p+1=1$, we have $a_1^1=0$ and $b_1^1=d_1^1=0$.

We suppose that Alice can make such a blue path $P_{p+1}$ such that $a_{p}^1\leq 1$, $b_{p}^1=d_{p}^1=0$ on the $p$-th round for each $p \ (p+2\leq n-3)$.
It suffices to show that this property holds on the $(p+1)$-th round. Suppose that Bob chooses $f_p=xy$ on the $p$-th round.
\setcounter{case}{0}
\begin{case}
$\{x,y\}\cap \{u^p,v^p\}\neq\emptyset$.
\end{case}

In this case, there exists some vertex $z\in \{x,y\}\cap \{u^p,v^p\}$ in $F^p$. Then $z=u^p$ or $z=v^p$. Without loss of generality, let $x=u^p$ or $x=v^p$. Suppose that $x=u^p$. Since there are at most one red edge incident to $u^p$ by induction hypothesis and $|V(H^p)|\geq (n-1-p)-1\geq 3$ on the first step of the $p$-th round, it follows that there are at most two red edge incident to $u^p$ and $|V(H^p)|\geq (n-1-p)-2\geq 2$ on the second step of the $p$-th round, and hence there exists a vertex $w\in V(H^p)$ such that Alice can color $e_{p+1}=xw$ blue on the $(p+1)$-th round. Note that $P_{p+2}$ is the path with two endpoints $u^{p+1}=w$ and $v^{p+1}=v^p$ induced by the edges in $\{xw\}\cup E(P_{p+1})$. Clearly, $a_{p+1}^1=b_{p+1}^1=d_{p+1}^1=0$.

Suppose that $x=v^p$. Since there is at most one red edge incident to $u^p$ but no red edges incident to $v^p$ by induction hypothesis and $|V(H^p)|\geq (n-1-p)-1\geq 3$ on the first step of the $p$-th round, it follows that there is one red edge incident to $v^p$ and $|V(H^p)|\geq (n-1-p)-2\geq 2$ on the second step of the $p$-th round, and hence there exists a vertex $w\in V(H^p\setminus u')$ such that Alice can color $e_{p+1}=xw$ blue on the $(p+1)$-th round, where $u'\in V(H^p)$, $u'u^p$ is an red edge if $a_p^1=1$ and $u'u^p$ is any edge if $a_p^1=0$. Note that $P_{p+2}$ is the path with two endpoints $u^{p+1}=u^p$ and $v^{p+1}=w$ induced by the edges in $\{xw\}\cup E(P_{p+1})$. Clearly, $a_{p+1}^1=a_p^1\leq 1$. Since $d_p^1=0$ and there are no red edges incident to $w=v^{p+1}$ on the $(p+1)$-th round, it follows that $b_{p+1}^1=0$ and $d_{p+1}^1=0$, although $xy$ is a red edge incident to $x=v^p$.

\setcounter{case}{1}
\begin{case}
$\{x,y\}\cap \{u^p,v^{p}\}=\emptyset$.
\end{case}

Note that $a_p^1\leq1$ and $b_p^1=d_p^1=0$ on the $p$-th round. If $x\notin V(H_{p})$ or $y\notin V(H_{p})$, then one of $\{x,y\}$ is on the path $P_{p+1}\setminus \{u^p,v^p\}$, and so $a_p^2\leq1$, $b_p^2=d_p^2=0$. If $x,y\in V(H_{p})$, then $a_p^2\leq 1$ and $b_p^2=0$ but $d_p^2=1$ on the $p$-th round.

\setcounter{subcase}{0}
\begin{subcase}
$x\notin V(H_{p})$ or $y\notin V(H_{p})$.
\end{subcase}

In this case, we have $a_p^2\leq1$, $b_p^2=d_p^2=0$.
Note that there is at most one red edge incident to $u^p$ and there exists a vertex $u'\in V(H^p)$ with at most one edge to the path $P_{p+1}\setminus \{u^p,v^p\}$, since $x\notin V(H_{p})$ or $y\notin V(H_{p})$.
Since $|V(H^p)|\geq (n-1-p)-2\geq 2$ by induction hypothesis on the first step of the $p$-th round, it follows that there exists a vertex $w\in V(H^p)$ such that Alice can color $e_{p+1}=u^pw$ blue on the $(p+1)$-th round. Note that $w\neq u'$ if there is a red edge from $u'$ to $P_{p+1}\setminus \{u^p,v^p\}$. Then $P_{p+2}$ is the path with two endpoints $u^{p+1}=w$ and $v^{p+1}=v^p$ induced by the edges in $\{u^pw\}\cup E(P_{p+1})$. Clearly, $a_{p+1}^1=b_{p+1}^1=d_{p+1}^1=0$.

\setcounter{subcase}{1}
\begin{subcase}
$x,y\in V(H_{p})$.
\end{subcase}

In this case, $a_p^2\leq 1$ and $b_p^2=0$ but $d_p^2=1$. Note that there is at most one red edge incident to $u^p$, denoted by $e$, if it exists. Note that one of $\{x,y\}$ is not incident to $e$. Without loss of generality, we assume that $y$ is not incident to $e$. Then let $y=u^{p+1}$ and $v^p=v^{p+1}$. Then $P_{p+2}$ is the path with two endpoints $u^{p+1}=y$ and $v^{p+1}=v^p$ induced by the edges in $\{u^py\}\cup E(P_{p+1})$. Clearly, $a_{p+1}^1\leq 1$, and $b_{p+1}^1=d_{p+1}^1=0$.
\end{proof}

From Property \ref{p1}, Alice can make a blue path $P_{n-3}$ such that $a_{n-4}^1\leq 1$, $b_{n-4}^1=d_{n-4}^1=0$ on the $(n-4)$-th round.
Let $V(H_{n-4})=\{t_3,t_4,t_5\}$. For convenience, we let
$t_1=u^{n-4}$ and $t_2=v_{n-4}$ be the two endpoints of $P_{n-3}$ in $K_n\setminus V(H_{n-4})$.

It suffices to construct a blue $P_5$ in $K_5$, where $V(K_5)=\{t_1,t_2,t_3,t_4,t_5\}$. Since $a_{n-4}^1\leq 1$, we can assume that $t_1t_5$ is red by Property \ref{p1}.
Bob chooses $f_{n-4}=t_2t_5$, $t_2t_4$, $t_1t_3$, $t_3t_5$ or $t_3t_4$.

Suppose that $f_{n-4}=t_2t_5$. Then Alice chooses $e_{n-3}=t_4t_5$.

If $f_{n-3}=t_3t_5$, then Alice chooses $e_{n-2}=t_2t_3$, and hence Alice chooses $e_{n-1}\in\{t_1t_4,t_3t_4\}\setminus\{f_{n-2}\}$;
If $f_{n-3}\neq t_3t_5$, then Alice chooses $e_{n-2}=t_3t_5$, and hence Alice chooses $e_{n-1}\in\{t_1t_4,t_2t_4,t_1t_3,t_2t_3\}\setminus\{f_{n-3},f_{n-2}\}$.

Suppose that $f_{n-4}=t_2t_4$. Then Alice chooses $e_{n-3}=t_4t_5$.

If $f_{n-3}=t_3t_5$, then Alice chooses $e_{n-2}=t_3t_4$, and hence Alice chooses $e_{n-1}\in\{t_1t_3,t_2t_3,t_2t_5\}\setminus\{f_{n-2}\}$;
If $f_{n-3}\neq t_3t_5$, then Alice chooses $e_{n-2}=t_3t_5$, and hence Alice chooses $e_{n-1}\in\{t_1t_3,t_2t_3,t_1t_4\}\setminus\{f_{n-3},f_{n-2}\}$.

Suppose that $f_{n-4}=t_1t_3$. Then Alice chooses $e_{n-3}=t_3t_5$.

If $f_{n-3}=t_4t_5$, then Alice chooses $e_{n-2}=t_3t_4$, and hence Alice chooses $e_{n-1}\in\{t_1t_4,t_2t_4,t_2t_5\}\setminus\{f_{n-2}\}$;
If $f_{n-3}\neq t_4t_5$, then Alice chooses $e_{n-2}=t_4t_5$, and hence Alice chooses $e_{n-1}\in\{t_1t_4,t_2t_4,t_2t_3\}\setminus\{f_{n-3},f_{n-2}\}$.

Suppose that $f_{n-4}=t_3t_5$. Then Alice chooses $e_{n-3}=t_4t_5$.

If $f_{n-3}=t_1t_3$, then Alice chooses $e_{n-2}=t_2t_3$, and hence Alice chooses $e_{n-1}\in\{t_1t_4,t_3t_4\}\setminus\{f_{n-2}\}$;
If $f_{n-3}\neq t_1t_3$, then Alice chooses $e_{n-2}=t_1t_3$, and hence Alice chooses $e_{n-1}\in\{t_2t_4,t_2t_5,t_3t_4\}\setminus\{f_{n-3},f_{n-2}\}$.

Suppose that $f_{n-4}=t_3t_4$. Then Alice chooses $e_{n-3}=t_4t_5$.

If $f_{n-3}= t_1t_3$, then Alice chooses $e_{n-2}=t_3t_5$, and hence Alice chooses $e_{n-1}\in\{t_1t_3,t_1t_4,t_2t_3,t_2t_4\}\setminus\{f_{n-2}\}$;
If $f_{n-3}\neq t_1t_3$, then Alice chooses $e_{n-2}=t_1t_3$, and hence Alice chooses $e_{n-1}\in\{t_2t_4,t_2t_5, t_3t_5\}\setminus\{f_{n-3},f_{n-2}\}$.
\end{proof}

The following corollary shows that the lower bound in Conjecture \ref{conj-1} is true.
\begin{corollary}\label{cor-path}
The minimum value of ${a}(T)$ among all trees $T$ of
order $n$ is realized when $T = P_n$, i.e., the path of order $n$.
\end{corollary}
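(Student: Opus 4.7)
The plan is essentially a one-line chaining of the two previously established results, so I do not expect any real new work. Specializing Theorem \ref{th-trees} to $p = q = 1$ gives the universal lower bound $a(T) \geq n$ for every tree $T$ on $n$ vertices. On the other hand, Theorem \ref{thm:pn-1} establishes the matching upper bound $a(P_n) = n$ for every $n \geq 5$. Chaining the two, for any $n \geq 5$ and any tree $T$ of order $n$,
\[
a(P_n) \;=\; n \;\leq\; a(T),
\]
so $P_n$ realizes the minimum of $a(\cdot)$ over the set of trees of order $n$.

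For completeness I would address the small cases $n = 3, 4$ by inspection. When $n = 3$, the only tree on three vertices is $P_3$ itself, so the statement is vacuous. When $n = 4$, the two trees are $P_4$ and $K_{1,3}$; Theorem \ref{thm:pn-1} gives $a(P_4) = 5$, and a short case analysis inside $K_4$ (Alice plays an edge $e_1 = v_1v_2$, Bob responds with the disjoint edge $f_1 = v_3v_4$; any choice of $e_2$ can be countered by a red edge through the would-be center of the claw) shows that Alice cannot force a monochromatic $K_{1,3}$ in $K_4$, so $a(K_{1,3}) \geq 5$ as well, and the corollary holds in this range too.

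The main ``obstacle'' is effectively absent: no new positional-game argument is required, since the entire combinatorial content is already packaged in Theorem \ref{th-trees} (trivial lower bound $a(T) \geq n$) and Theorem \ref{thm:pn-1} (matching upper bound for $P_n$). The proof of the corollary therefore reduces to quoting these two statements together with the brief small-$n$ bookkeeping above.
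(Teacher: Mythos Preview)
Your proposal is correct and follows essentially the same approach as the paper: both combine the trivial lower bound $a(T)\ge n$ with Theorem~\ref{thm:pn-1} for $n\ge 5$, and handle $n\le 4$ separately (the paper simply quotes $a(K_{1,3})=5$ while you sketch why $a(K_{1,3})\ge 5$, which is a minor expository difference).
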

\begin{proof}
The minimum value of ${a}(T)$ among all trees $T$ of
order $n$ is $n$, otherwise it is impossible to construct the tree $T$ of order $n$. All trees $T$ of
order $n=4$ are $P_4$ and $K_{1,3}$, and hence $a(K_{1,3})=5\geq a(P_{4})=5$. So the minimum value of ${a}(T)$ among all trees $T$ of
order $n$ is realized when $T = P_n$ for $n\geq 4$. The proof is complete.
\end{proof}

Bollob\'{a}s and Papaionnou announced that ${a}^*(P_n)=n$ for $n\geq 5$, but their paper have not been published (see \cite{Ha82}). The following corollary gives an example satisfying the solution to Problem \ref{prob1}.  
\begin{corollary}\label{cor3}
For $n\geq 5$, we have ${a}^*(P_n)={a}(P_n)=n$.  
\end{corollary}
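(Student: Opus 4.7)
The plan is straightforward: chain together Theorem~\ref{thm:pn-1} and Observation~\ref{obs}. First, I would invoke Theorem~\ref{thm:pn-1} to get $a(P_n) = n$ for $n \ge 5$. Then, applying Observation~\ref{obs} (which states $a^*(F) \le a(F)$ for every graph $F$), I would immediately obtain $a^*(P_n) \le a(P_n) = n$.

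For the matching lower bound $a^*(P_n) \ge n$, the plan is a trivial vertex-count argument: if the first-player game is played on a board $K_N$ with $N < n$, then $K_N$ does not even contain a copy of $P_n$ as a subgraph (since $P_n$ has $n$ vertices), so Alice can never form a blue $P_n$ regardless of her strategy. Hence Alice has no winning strategy whenever $N < n$, which forces $a^*(P_n) \ge n$. Combining these two inequalities yields $n \le a^*(P_n) \le a(P_n) = n$, giving the desired equalities $a^*(P_n) = a(P_n) = n$.

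The real ``obstacle'' has already been overcome inside the proof of Theorem~\ref{thm:pn-1}, where all the nontrivial path-building case analysis lives; once that theorem is in hand, this corollary is essentially bookkeeping, and I expect the written proof to occupy only a couple of lines referencing the two earlier results.
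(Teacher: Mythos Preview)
Your proposal is correct and matches the paper's own proof essentially line for line: the paper also notes $a^*(P_n)\ge n$ trivially, then invokes Observation~\ref{obs} and Theorem~\ref{thm:pn-1} to get $a^*(P_n)\le a(P_n)=n$.
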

\begin{proof}
It is easy to see that $a^*(P_n)\geq n$. From Observation \ref{obs} and Theorem \ref{thm:pn-1}, we have $a^*(P_n)\leq a(P_n)=n$. This means that ${a}^*(P_n)={a}(P_n)=n$.   
\end{proof}

\section*{Acknowledgement}
Part of this work was finished when the third and fourth authors were visiting RIMS and discussing with the second author. They are grateful to the friendly atmosphere within this institute of Kyoto University.

\end{document}